\documentclass[reqno]{amsart}

\usepackage{amsfonts}
\usepackage{amsmath}
\usepackage{amssymb}

\usepackage{cite}
\usepackage{url}

\setcounter{MaxMatrixCols}{10}

\allowdisplaybreaks


\textheight 18cm
\newtheorem{theorem}{Theorem}

\newtheorem{definition}[theorem]{Definition}

\newtheorem{lemma}[theorem]{Lemma}
\newtheorem{notation}[theorem]{Notation}
\newtheorem{proposition}[theorem]{Proposition}
\newtheorem{remark}[theorem]{Remark}


\begin{document}

\title[Existence of solution to a thermistor problem]{%
Existence of solution to a nonlocal conformable fractional thermistor problem}

\author[M. R. Sidi Ammi]{Moulay Rchid Sidi Ammi}
\address{M. R. Sidi Ammi: Department of Mathematics,
AMNEA Group, Faculty of Sciences and Techniques,
Moulay Ismail University,
B.P. 509, Errachidia, Morocco.}
\email{sidiammi@ua.pt, rachidsidiammi@yahoo.fr}
\urladdr{http://orcid.org/0000-0002-4488-9070}

\author[D. F. M. Torres]{Delfim F. M. Torres}
\address{D. F. M. Torres:
Center for Research and Development in Mathematics and Applications (CIDMA),
Department of Mathematics, University of Aveiro, 3810-193 Aveiro, Portugal.}
\email{delfim@ua.pt}
\urladdr{http://orcid.org/0000-0001-8641-2505}


\date{Received: June 20, 2018; Revised: July 26 and Aug 08, 2018; Accepted: Aug 13, 2018}

\subjclass[2010]{Primary 35A01; Secondary 26A33.}

\keywords{Existence of solutions, fractional differential equations,
conformable fractional derivatives.}

\thanks{This is a preprint of a paper whose final and definite form is with 
\emph{Commun. Fac. Sci. Univ. Ank. Ser. A1 Math. Stat.}, ISSN 1303-5991, 
available at [\url{http://communications.science.ankara.edu.tr}]. 
Submitted June 20, 2018; Revised: July 26 and Aug 08, 2018; Accepted: Aug 13, 2018.}


\begin{abstract}
We study a nonlocal thermistor problem for fractional derivatives
in the conformable sense. Classical Schauder's fixed point
theorem is used to derive the existence of a tube solution.
\end{abstract}

\maketitle


\section{Introduction}

The fractional calculus may be considered an old and yet novel topic.
It has started from some speculations of Leibniz, in 1695 and 1697,
followed later by Euler, in 1730, and has been strongly developed
till present days \cite{MR2799292,MR2736622}.
In recent years, considerable interest in fractional calculus has been stimulated
by its many applications in several fields of science, including physics,
chemistry, aerodynamics, electrodynamics of complex media, signal
processing, and optimal control \cite{MR3787674,MR2768178,28}.
Most fractional derivatives are defined through fractional integrals
\cite{28,MR3561379,29,29bis}.
Due to this fact, those fractional derivatives inherit a nonlocal behavior,
which leads to many interesting applications, including memory effects
and future dependence \cite{MR2768178,Ref:1:2,66,67,16,17,24}.

In 2014, a new simpler and more efficient definition of fractional
derivative, depending just on the basic limit definition of derivative,
was introduced in \cite{khalil}. See also \cite{MR3293309,chung,unal,MyID:324,eslami}
for further developments on conformable differentiation. The new notion
is prominently compatible and conformable with the classical derivative.
In contrast with other fractional definitions, this new concept satisfies
more standard formulas for the derivative of the product and quotient
of two functions and has a simpler chain rule. In addition to the conformable
derivative, the conformable fractional integral has been also introduced,
and Rolle and mean value theorems for conformable fractional differentiable
functions obtained. The subject is nowadays under strong development
\cite{MR3641366,MyID:379,MR3805277,MR3815617}. This is well explained
by the fact that the new definition reflects a natural extension
of the usual derivative to solve different types of fractional differential equations
\cite{ahmed,emrah,rochdi}. The main advantages of the conformable calculus,
among others, are: (i) the simple nature of the conformable fractional derivative,
which allows for many extensions of classical theorems in calculus
(e.g., product and chain rules) that are indispensable in applications
but not valid for classical fractional differential models;
(ii) the conformable fractional derivative of a constant function is zero,
which is is not the case for some fractional derivatives like the Riemann--Liouville;
(iii) conformable fractional derivatives, conformable chain rule,
conformable integration by parts, conformable Gronwall's inequality,
conformable exponential function, conformable Laplace transform,
all tend to the corresponding ones in usual calculus;
(iv) while in the standard calculus there exist functions that do not
have Taylor power series representations about certain points,
in the theory of conformable calculus they do have;
(v) a nondifferentiable function can be
differentiable in the conformable sense.

The thermistor concept was first discovered in 1833 by Michael Faraday (1791--1867),
who noticed that the silver sulfides resistance decreases as the temperature increases.
This lead Samuel Ruben (1900--1988) to invent the first commercial thermistor
in the 1930s. Roughly speaking, a thermistor is a circuit device that may
be used either as a current limiting device or a
temperature sensing device. Typically, it is a small cylinder
made from a ceramic material whose electrical conductivity
depends strongly on the temperature.
The heat produced by an electrical current, passing through a
conductor device, is governed by the so-called thermistor equations.
Nowadays, thermistors can be found everywhere, in airplanes, air conditioners,
cars, computers, medical equipment, hair dryers, portable heaters,
incubators, electrical outlets, refrigerators, digital thermostats,
ice sensors and aircraft wings, ovens, stove tops and in all kinds of appliances.
Knowing it, it is not a surprise that a great deal of attention
is currently paid, by many authors, to the study of thermistor problems
\cite{MR3806233,MR3810479,MR3767245}. In \cite{sidiammi1},
existence and uniqueness of a positive solution to generalized
nonlocal thermistor problems with fractional-order derivatives were discussed.
Recently, Sidi Ammi et al. studied global existence
of solutions for a fractional Caputo nonlocal thermistor problem \cite{MR3736617}.
Existence and uniqueness results for a fractional
Riemann--Liouville nonlocal thermistor problem on arbitrary time scales
are investigated in \cite{MyID:365}, while dynamics and stability results
for Hilfer fractional-type thermistor problems are studied in \cite{hilfer}.
The Hilfer fractional derivative has been used to interpolate both
the Riemann--Liouville and the Caputo fractional derivative.

While previous works assume the electrical conductivity to be a smooth
and bounded function from above and below, or a Lipschitz continuous function
depending strongly in both time and temperature, in contrast, here
we only use the hypothesis of continuity on the electrical conductivity.
Motivated by the results of \cite{hammoudi}, we establish
existence of a tube solution for a conformable fractional nonlocal
thermistor problem by means of Schauder's fixed point theorem.
More precisely, we are concerned with heat conduction in a thermistor
used as a current surge regulator governed by the following equations:
\begin{equation}
\label{p1}
\begin{split}
u^{(\alpha)}(t) & = \frac{\lambda f(t, u(t))}{\left(
\int_{a}^{T}f(x, u(x))\, dx\right)^{2}}= g(t, u(t)) \, ,
\quad  t \in  [a, T]  \, , \\
u(a)& =u_a,
\end{split}
\end{equation}
where $u$ describes the temperature of the conductor
and $u^{(\alpha)}(t)$ denotes the conformable fractional
derivative of $u$ at $t$ of order $\alpha$, $\alpha\in(0,1)$.
We assume that $a$, $T$ and $\lambda$ are fixed positive reals.
Moreover, as already mentioned, we assume the following hypothesis:
\begin{itemize}
\item[$(H_1)$] $f: [a, T] \times \mathbb{R}^{+}\rightarrow \mathbb{R}^{+*}$
is a continuous function.
\end{itemize}

The rest of the article is arranged as follows.
In Section~\ref{sec:2}, we give preliminary definitions
and set the basic concepts and necessary results from the
simple and interesting conformable fractional calculus.
Then, in Section~\ref{sec:3}, we prove
existence of a tube solution via Schauder's
fixed point theorem.


\section{Preliminaries}
\label{sec:2}

We first recall the definition of conformable
fractional derivative as given in \cite{khalil}.

\begin{definition}[Conformable fractional derivative \cite{khalil}]
\label{def:cfd}
Let $\alpha\in(0,1)$ and $f : [0,\infty)\rightarrow \mathbb{R}$. The
conformable fractional derivative of $f$ of order $\alpha$ is defined by
$$
T_{\alpha}(f)(t):=\lim_{\epsilon\rightarrow0}
\frac{f(t+\epsilon t^{1-\alpha})-f(t)}{\epsilon}
$$
for all $t>0$. Often, we write $f^{(\alpha)}$ instead of $T_{\alpha}(f)$
to denote the conformable fractional derivative of $f$ of order $\alpha$.
In addition, if the conformable fractional derivative of $f$ of order
$\alpha$ exists, then we simply say that $f$ is $\alpha$-differentiable.
If $f$ is $\alpha$-differentiable in some $t \in (0,a)$, $a>0$, and
$\lim_{t\rightarrow 0^{+}}f^{(\alpha)}(t)$ exists, then we
define $f^{(\alpha)}(0):=\lim_{t\rightarrow 0^{+}}f^{(\alpha)}(t)$.
\end{definition}

If $f$ is differentiable at a point $t>0$, then
$T_{\alpha}(f)(t)=t^{1-\alpha}\frac{df}{dt}(t)$.

\begin{remark}
If $f \in C^1$, then one has
$$
\lim_{\alpha\rightarrow 1} T_\alpha(f)(t) = f'(t),
\quad
\lim_{\alpha\rightarrow 0} T_\alpha(f)(t) = t f'(t).
$$
\end{remark}

\begin{definition}[Conformable fractional integral \cite{khalil}]
Let $\alpha\in(0,1)$, $f : [a,\infty)\rightarrow \mathbb{R}$. The
conformable fractional integral of $f$ of order $\alpha$
from $a$ to $t$, denoted by $I_{\alpha}^{a}(f)(t)$, is defined by
$$
I_{\alpha}^{a}(f)(t)
:=\int_{a}^{t}\frac{f(\tau)}{\tau^{1-\alpha}}d\tau,
$$
where the above integral is the usual improper Riemann integral.
\end{definition}

\begin{theorem}[See \cite{khalil}]
\label{th1}
If $f$ is a continuous function in the domain of $I_{\alpha}^{a}$, then
$T_{\alpha}\left(I_{\alpha}^{a}(f)\right)(t)=f(t)$ for all $t\geq a$.
\end{theorem}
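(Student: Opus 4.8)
The plan is to reduce the conformable statement to the classical fundamental theorem of calculus, using the bridge formula stated immediately after Definition~\ref{def:cfd}: for a function that is (ordinarily) differentiable at $t>0$, the conformable derivative factors as $T_{\alpha}(g)(t)=t^{1-\alpha}\,g'(t)$. So the first step is to set $F(t):=I_{\alpha}^{a}(f)(t)=\int_{a}^{t}\tau^{\alpha-1}f(\tau)\,d\tau$ and to show that $F$ is differentiable in the usual sense, after which the conformable derivative is obtained simply by multiplying $F'$ by $t^{1-\alpha}$.

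The key observation is that, since $f$ is continuous on $[a,T]$ and $t\geq a>0$, the integrand $\tau\mapsto \tau^{\alpha-1}f(\tau)$ is continuous on the interval of integration: the factor $\tau^{\alpha-1}$ is continuous and bounded there because $\tau$ stays away from $0$. Hence the classical fundamental theorem of calculus applies to $F$, giving that $F$ is differentiable for $t>a$ with
$$
F'(t)=t^{\alpha-1}f(t).
$$
Combining this with the bridge formula yields
$$
T_{\alpha}\bigl(I_{\alpha}^{a}(f)\bigr)(t)
= t^{1-\alpha}F'(t)
= t^{1-\alpha}\cdot t^{\alpha-1}f(t)
= f(t),
$$
which is exactly the claimed identity for $t>a$.

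The only point needing a little care is the left endpoint $t=a$, where the integral degenerates and one-sided differentiability must be invoked; here one appeals to the continuity of $f$ together with the one-sided version of the fundamental theorem and the convention for $f^{(\alpha)}$ fixed in Definition~\ref{def:cfd}, so that the identity extends by continuity to all $t\geq a$. I do not expect any genuine obstacle in this argument: the content is essentially that conformable integration followed by conformable differentiation cancels, and once the factorization $T_{\alpha}(g)=t^{1-\alpha}g'$ is in hand the statement is a direct corollary of the ordinary fundamental theorem of calculus. The most delicate verification is simply confirming that $f$ continuous guarantees an integrand to which the classical theorem legitimately applies on $[a,t]$ with $a>0$.
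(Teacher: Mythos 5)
Your proof is correct and is essentially the standard argument for this result: the paper itself states Theorem~\ref{th1} without proof, citing \cite{khalil}, and the proof given there is exactly your reduction --- continuity of $f$ (with $\tau$ bounded away from $0$) makes $F(t)=\int_a^t \tau^{\alpha-1}f(\tau)\,d\tau$ classically differentiable by the fundamental theorem of calculus, and the identity $T_\alpha(F)(t)=t^{1-\alpha}F'(t)=t^{1-\alpha}\,t^{\alpha-1}f(t)=f(t)$ follows. Your additional care at the endpoint $t=a$, via one-sided differentiability and the convention in Definition~\ref{def:cfd}, is a sound refinement rather than a deviation.
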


\begin{notation}
Let $0 < a < b$. We denote by ${_{\alpha}\mathfrak{J}}_{a}^{b}[f]$
the value of the integral $\int_{a}^{b}\frac{f(t)}{t^{1-\alpha}}dt$,
that is, ${_{\alpha}\mathfrak{J}}_{a}^{b}[f] := I_{\alpha}^{a}(f)(b)$.
We also denote by $C^{(\alpha)}([a,b],\mathbb{R})$, $0<a<b$, $\alpha>0$,
the set of all real-valued functions $f:[a,b]\rightarrow \mathbb{R}$ that are
$\alpha$-differentiable and for which the $\alpha$-derivative is continuous.
We often abbreviate $C^{(\alpha)}([a,b],\mathbb{R})$ by $C^{(\alpha)}([a,b])$.
\end{notation}

\begin{lemma}[See \cite{hammoudi}]
\label{le}
Let $r\in C^{(\alpha)}([a,b])$, $0<a<b$, be such that $r^{(\alpha)}(t)<0$
on the set $\left\{ t\in[a,b] : r(t)>0\right\}$. If $r(a)\leq 0$,
then $r(t)\leq 0$ for every $t\in[a,b]$.
\end{lemma}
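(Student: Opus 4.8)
The plan is to argue by contradiction, exploiting that for $t \ge a > 0$ the weight $t^{1-\alpha}$ is strictly positive, so (by the remark following Definition~\ref{def:cfd}) the conformable derivative satisfies $r^{(\alpha)}(t) = t^{1-\alpha} r'(t)$ and hence has the same sign as the ordinary derivative $r'(t)$. Thus the hypothesis $r^{(\alpha)}(t) < 0$ on $\{t : r(t) > 0\}$ says precisely that $r$ is strictly decreasing through every point where it is positive.

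Assume, contrary to the claim, that the set $P := \{t \in [a,b] : r(t) > 0\}$ is nonempty, and pick $t_0 \in P$. Since $r$ is $\alpha$-differentiable it is in particular continuous on $[a,b]$, and $r(a) \le 0 < r(t_0)$. First I would locate the last point to the left of $t_0$ where $r$ is nonpositive: set $t_1 := \sup\{t \in [a, t_0] : r(t) \le 0\}$. This supremum is taken over a nonempty (it contains $a$) closed set, so $t_1$ belongs to it and $r(t_1) \le 0$; moreover $t_1 < t_0$ because $r(t_0) > 0$. By maximality, $r(t) > 0$ for every $t \in (t_1, t_0]$, and continuity at $t_1$ then forces $r(t_1) = 0$.

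On the open interval $(t_1, t_0)$ we have $r(t) > 0$, hence $r^{(\alpha)}(t) < 0$ and therefore $r'(t) = t^{\alpha - 1} r^{(\alpha)}(t) < 0$. Applying the classical mean value theorem to the continuous function $r$ on $[t_1, t_0]$ yields a point $\xi \in (t_1, t_0)$ with $r(t_0) - r(t_1) = r'(\xi)(t_0 - t_1)$. Since $r'(\xi) < 0$ and $t_0 - t_1 > 0$, this gives $r(t_0) < r(t_1) = 0$, contradicting $r(t_0) > 0$. Hence $P = \emptyset$ and $r(t) \le 0$ for all $t \in [a,b]$.

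I expect the only delicate point to be the extraction of the subinterval $[t_1, t_0]$ on which the sign hypothesis is guaranteed to hold throughout: the condition $r^{(\alpha)} < 0$ is assumed only where $r > 0$, so one must isolate an interval adjacent to $t_0$ on which $r$ stays positive before invoking monotonicity. The reduction $r^{(\alpha)} = t^{1-\alpha} r'$, valid because $a > 0$ keeps the weight bounded away from zero, is what lets me replace the conformable condition by an ordinary-derivative sign and close the argument with the standard mean value theorem; alternatively, a conformable mean value theorem could be applied directly on $[t_1, t_0]$.
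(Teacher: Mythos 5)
Your argument is correct, and there is in fact nothing in the paper to compare it against: Lemma~\ref{le} is stated with a citation to \cite{hammoudi} and no proof is given here. The route you take — locate the last zero $t_1$ of $r$ to the left of a point $t_0$ with $r(t_0)>0$, so that $r>0$ on $(t_1,t_0]$ and $r(t_1)=0$, then apply a mean value theorem on $[t_1,t_0]$ to contradict $r(t_0)>0$ — is the standard one for this lemma (the cited source argues in the same spirit, working with the conformable mean value theorem of Khalil et al.\ directly, an alternative you yourself point out). The one point of rigor you should patch is the very first reduction: the remark after Definition~\ref{def:cfd} is stated in the direction ``if $f$ is \emph{classically} differentiable at $t>0$, then $T_\alpha(f)(t)=t^{1-\alpha}f'(t)$'', whereas your $r$ is only assumed to be $\alpha$-differentiable, so you need the converse implication before you may speak of $r'$ at all. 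The converse does hold for $t>0$: substituting $h=\epsilon t^{1-\alpha}$ in the defining limit (a legitimate change of variable precisely because $t\ge a>0$, so $t^{1-\alpha}\neq 0$) shows that the conformable difference quotient equals $t^{1-\alpha}$ times the ordinary one, hence existence of $r^{(\alpha)}(t)$ forces existence of $r'(t)=t^{\alpha-1}r^{(\alpha)}(t)$. With that one line added, the rest of your proof — nonemptiness and closedness of $\{t\in[a,t_0]:r(t)\le 0\}$, the conclusion $r(t_1)=0$ by one-sided continuity, the sign of $r'$ on $(t_1,t_0)$, and the classical mean value theorem yielding $r(t_0)<0$ — is complete and correct.
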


\begin{theorem}[See \cite{hammoudi}]
\label{th2}
If $g\in L^{1}([a,b])$, then function $x:[a,b]\rightarrow \mathbb{R}$ defined by
\begin{equation*}
x(t):=e^{-\frac{1}{\alpha}\left(\frac{t}{a}\right)^{\alpha}}\left(
e^{\frac{1}{\alpha}}x_{0} + {_{\alpha}\mathfrak{J}}_{a}^{t}\left[
\frac{g(s)}{e^{-\frac{1}{\alpha}(\frac{s}{a})^{\alpha}}}\right]\right)
\end{equation*}
is solution to problem
\begin{equation*}
\begin{cases}
x^{(\alpha)}(t)+\frac{1}{a^{\alpha}}x(t)=g(t), & t\in[a,b], \quad a>0,\\
x(a)=x_{0}.
\end{cases}
\end{equation*}
\end{theorem}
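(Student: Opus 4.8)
The plan is to verify the formula directly by means of the conformable integrating factor $\mu(t) := e^{\frac{1}{\alpha}(t/a)^{\alpha}}$, so that the proposed solution reads $x(t) = \mu(t)^{-1}\Phi(t)$ with $\Phi(t) := e^{1/\alpha}x_{0} + {_{\alpha}\mathfrak{J}}_{a}^{t}[\mu g]$ and $\mu(a) = e^{1/\alpha}$. First I would dispatch the initial condition: since ${_{\alpha}\mathfrak{J}}_{a}^{a}[\mu g] = 0$, one immediately gets $x(a) = \mu(a)^{-1}\mu(a)x_{0} = x_{0}$, as required.

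Next I would compute the conformable derivative of the integrating factor. Because $\mu \in C^{1}$, the identity $T_{\alpha}(\mu)(t) = t^{1-\alpha}\mu'(t)$ applies, and a direct differentiation gives $\mu'(t) = \mu(t)\, t^{\alpha-1}/a^{\alpha}$, whence
$$
\mu^{(\alpha)}(t) = t^{1-\alpha}\mu(t)\frac{t^{\alpha-1}}{a^{\alpha}} = \frac{1}{a^{\alpha}}\mu(t).
$$
In other words, $\mu$ is exactly the conformable analogue of the exponential that solves the homogeneous equation $\mu^{(\alpha)} = a^{-\alpha}\mu$, which is what makes it the correct integrating factor here.

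The core of the argument is then to differentiate the product $y := \mu x = \Phi$ in two ways. On the one hand, the conformable product rule (which follows from the classical Leibniz rule multiplied by $t^{1-\alpha}$) together with the previous computation yields
$$
y^{(\alpha)} = \mu^{(\alpha)}x + \mu\, x^{(\alpha)} = \mu\left(x^{(\alpha)} + \frac{1}{a^{\alpha}}x\right).
$$
On the other hand, $y = \Phi$ equals $I_{\alpha}^{a}(\mu g)$ up to the additive constant $e^{1/\alpha}x_{0}$; since the conformable derivative annihilates constants and is linear, Theorem~\ref{th1} gives $y^{(\alpha)} = T_{\alpha}\bigl(I_{\alpha}^{a}(\mu g)\bigr) = \mu g$. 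Equating the two expressions for $y^{(\alpha)}$ and dividing by $\mu(t) > 0$ produces $x^{(\alpha)}(t) + \frac{1}{a^{\alpha}}x(t) = g(t)$, the desired equation.

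I expect the main obstacle to be the regularity issue concealed in the appeal to Theorem~\ref{th1}: that statement is phrased for continuous integrands, whereas here $\mu g$ is merely of class $L^{1}([a,b])$. The honest way to close this gap is to invoke the Lebesgue differentiation theorem, which guarantees that $T_{\alpha}\bigl(I_{\alpha}^{a}(\mu g)\bigr)(t) = \mu(t)g(t)$ for almost every $t \in [a,b]$; consequently the identity $x^{(\alpha)} + a^{-\alpha}x = g$ is to be read in the almost-everywhere sense. One must likewise justify the product rule only at points where $\Phi$ is differentiable, again a full-measure set, but since $\mu$ is smooth and nonvanishing this reduces cleanly to the classical case.
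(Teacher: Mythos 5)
Your verification is correct, but there is nothing in the paper to compare it against: Theorem~\ref{th2} is stated here without proof, being imported verbatim from the reference \cite{hammoudi}, so the assessment must rest on the merits of your argument alone. On those merits it is sound. The integrating factor $\mu(t)=e^{\frac{1}{\alpha}(t/a)^{\alpha}}$ indeed satisfies $\mu^{(\alpha)}=a^{-\alpha}\mu$; the conformable product rule you invoke is legitimate because, for $t>0$, $\alpha$-differentiability is equivalent to ordinary differentiability (substitute $h=\epsilon t^{1-\alpha}$ in Definition~\ref{def:cfd}), so the rule reduces to the classical Leibniz rule multiplied by $t^{1-\alpha}$; and equating the two computations of $(\mu x)^{(\alpha)}$ gives the equation after dividing by $\mu>0$. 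You also correctly isolate the one genuinely delicate point: Theorem~\ref{th1} is stated for continuous integrands, whereas $\mu g$ is only $L^{1}$, so $T_{\alpha}\bigl(I_{\alpha}^{a}(\mu g)\bigr)=\mu g$ can only be asserted almost everywhere, via Lebesgue differentiation of the absolutely continuous function $t\mapsto\int_{a}^{t}\mu(s)g(s)s^{\alpha-1}\,ds$ (note that $s\mapsto\mu(s)s^{\alpha-1}$ is bounded on $[a,b]$ precisely because $a>0$). Consequently the differential equation holds in the almost-everywhere (Carath\'eodory) sense, which is the only coherent reading of the statement for $g\in L^{1}$; it is worth observing that in the paper's actual application of this theorem (the operator $\mathbf{K}$ in Proposition~\ref{pr2}), the forcing term $g(t,\widetilde{u}(t))+\frac{1}{a^{\alpha}}\widetilde{u}(t)$ is continuous by hypothesis $(H_1)$, so there the identity holds everywhere and no almost-everywhere caveat is needed.
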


\begin{proposition}[See \cite{hammoudi}]
\label{pr3}
If $x:(0,\infty)\rightarrow \mathbb{R}$
is $\alpha$-differentiable at $t\in[a,b]$, then
$$
|x(t)|^{(\alpha)}=\frac{x(t) \, x^{\alpha}(t)}{|x(t)|}.
$$
\end{proposition}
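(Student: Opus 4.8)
The plan is to reduce the computation of the conformable derivative of $|x|$ to an ordinary one-variable differentiation, using the bridge formula $T_\alpha(f)(t) = t^{1-\alpha} f'(t)$ recalled right after Definition~\ref{def:cfd}. The implicit standing assumption is $x(t) \neq 0$, since the right-hand side divides by $|x(t)|$; at points where $x(t)=0$ the stated formula is not defined in the natural way, so I would restrict attention to the set on which $x(t)\ne 0$.

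First I would observe that, for $t>0$, the change of variables $h = \epsilon t^{1-\alpha}$ in the limit defining $T_\alpha$ shows that $x$ is $\alpha$-differentiable at $t$ if and only if it is differentiable at $t$ in the usual sense, and in that case $x^{(\alpha)}(t) = t^{1-\alpha} x'(t)$. In particular, $\alpha$-differentiability at $t$ forces $x$ to be continuous at $t$, a fact I will use in a moment.

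Next, since $x(t)\ne 0$ and $x$ is continuous at $t$, the sign of $x$ is constant on a neighbourhood of $t$, so $|x|$ is differentiable there with the classical formula $|x|'(t) = \dfrac{x(t)\,x'(t)}{|x(t)|}$ (treating the cases $x(t)>0$ and $x(t)<0$ separately). Applying the bridge formula once more, now to $|x|$, gives
$$
|x|^{(\alpha)}(t) = t^{1-\alpha}\,|x|'(t) = t^{1-\alpha}\,\frac{x(t)\,x'(t)}{|x(t)|} = \frac{x(t)\,\bigl(t^{1-\alpha}x'(t)\bigr)}{|x(t)|} = \frac{x(t)\,x^{(\alpha)}(t)}{|x(t)|},
$$
which is the desired identity.

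Alternatively, one could work straight from the limit definition, writing the difference quotient of $|x|$ as $\dfrac{|x(t+\epsilon t^{1-\alpha})|^{2} - |x(t)|^{2}}{\epsilon\,\bigl(|x(t+\epsilon t^{1-\alpha})| + |x(t)|\bigr)}$ and factoring the numerator as $\bigl(x(t+\epsilon t^{1-\alpha}) - x(t)\bigr)\bigl(x(t+\epsilon t^{1-\alpha}) + x(t)\bigr)$; then letting $\epsilon \to 0$ and invoking continuity of $x$ at $t$ together with $x(t)\ne 0$ yields the quotient $\frac{x^{(\alpha)}(t)\cdot 2x(t)}{2|x(t)|}$, the same value. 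I do not expect any serious obstacle in either route: the only genuine subtlety is the exclusion of the zeros of $x$, where neither side of the claimed equality is meaningful.
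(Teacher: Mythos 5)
Your proof is correct, and both of your routes (the bridge formula $T_\alpha(f)(t)=t^{1-\alpha}f'(t)$ plus the classical derivative of $|x|$ away from zeros, or the direct difference-quotient computation with the factorization $|y|^2-|z|^2=(y-z)(y+z)$) are sound. There is, however, nothing in this paper to compare against: Proposition~\ref{pr3} is stated with the tag ``See \cite{hammoudi}'' and is imported without proof from that reference, where the argument is essentially the local sign-analysis you give (on a neighbourhood of a point with $x(t)>0$ one has $|x|=x$, and with $x(t)<0$ one has $|x|=-x$, whence $|x|^{(\alpha)}(t)=\pm x^{(\alpha)}(t)=x(t)x^{(\alpha)}(t)/|x(t)|$). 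Two points in your write-up are genuine improvements on the statement as printed here: you make explicit the restriction to the set where $x(t)\neq 0$, which the proposition silently assumes (the right-hand side is otherwise undefined, and indeed the paper only ever applies the proposition on the set $A=\{t: |u(t)-v(t)|>M(t)\geq 0\}$, where the relevant function is nonzero); and you implicitly correct the typo $x^{\alpha}(t)$, which should read $x^{(\alpha)}(t)$. One minor remark: your claim that $\alpha$-differentiability at $t>0$ is \emph{equivalent} to ordinary differentiability there (via the substitution $h=\epsilon t^{1-\alpha}$) is correct and slightly stronger than the one-directional statement quoted after Definition~\ref{def:cfd}; it is exactly what justifies applying the bridge formula to $|x|$ after establishing its classical differentiability.
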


For proving our main results, we make use
of the following auxiliary definition and lemmas.

\begin{definition}[See p.~112 of \cite{MR1987179}]
Let $X$, $Y$ be topological spaces.
A map $f:X\rightarrow Y$ is called compact if $f(X)$
is contained in a compact subset of $Y$.
\end{definition}

\begin{lemma}[See \cite{li}]
\label{lem2.2}
Let $M$ be a subset of $C([0,T])$. Then $M$ is precompact
if and only if the following conditions  hold:
\begin{enumerate}
\item $\{u(t):u \in M\}$ is uniformly bounded,

\item $\{u(t):u \in M\}$ is equicontinuous on $[0,T]$.
\end{enumerate}
\end{lemma}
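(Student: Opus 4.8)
The plan is to reduce the statement to the standard fact that, in a complete metric space, a subset is precompact (has compact closure) if and only if it is totally bounded. Since $C([0,T])$ equipped with the supremum norm is a Banach space, hence complete, it suffices to prove that $M$ is totally bounded if and only if it is uniformly bounded and equicontinuous. I would organize the argument around this equivalence, treating the two implications separately.

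For the necessity direction (precompact, equivalently totally bounded, implies (1) and (2)), I would first note that a totally bounded set is bounded, which gives (1) immediately. For equicontinuity, given $\epsilon>0$ I would cover $M$ by finitely many balls of radius $\epsilon/3$ centered at functions $u_1,\dots,u_n\in C([0,T])$. Each $u_i$ is continuous on the compact interval $[0,T]$, hence uniformly continuous, so there is $\delta_i>0$ controlling its modulus of continuity; setting $\delta=\min_i\delta_i$ and using a three-$\epsilon$ estimate --- writing $|u(t)-u(s)|\le|u(t)-u_i(t)|+|u_i(t)-u_i(s)|+|u_i(s)-u(s)|$ for the index $i$ with $\|u-u_i\|_\infty<\epsilon/3$ --- yields equicontinuity with the single modulus $\delta$.

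The substance of the lemma, and the step I expect to be the main obstacle, is the converse: uniform boundedness and equicontinuity together imply total boundedness. Here I would fix $\epsilon>0$, use (2) to choose $\delta>0$ so that $|t-s|<\delta$ forces $|u(t)-u(s)|<\epsilon/4$ for every $u\in M$, and partition $[0,T]$ by points $0=t_0<t_1<\cdots<t_m=T$ with consecutive gaps smaller than $\delta$. By (1) the sampled values all lie in a fixed bounded interval of $\mathbb{R}$, which I would cover by finitely many subintervals of length $\epsilon/4$. Associating to each $u\in M$ the tuple of subintervals containing $u(t_0),\dots,u(t_m)$ partitions $M$ into finitely many classes; two functions in the same class differ by at most $\epsilon/4$ at every node, and combining this with the equicontinuity estimate across each subinterval shows that any two such functions are within $\epsilon$ in the sup norm. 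Choosing one representative per nonempty class gives a finite $\epsilon$-net, establishing total boundedness.

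Finally I would assemble the pieces: by completeness of $C([0,T])$, total boundedness is equivalent to precompactness, so the two implications above prove the stated equivalence. The delicate point throughout is the interplay between the finite sampling on $[0,T]$ and the equicontinuity modulus, which is exactly what lets one pass from finitely many real values back to uniform control of the functions; completeness is what upgrades the resulting total boundedness to genuine precompactness.
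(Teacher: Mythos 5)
The paper does not actually prove this lemma: it is quoted as a known result (the classical Arzel\`a--Ascoli compactness criterion), with a citation to Li and Sarwar, and is later used as a black box in the proof of Proposition~\ref{pr2}. So there is no proof in the paper to compare against; your proposal must be judged on its own, and it stands up. Your argument is the standard and correct proof of Arzel\`a--Ascoli: you correctly reduce precompactness to total boundedness using completeness of $C([0,T])$ under the sup norm; the necessity direction via a finite $\epsilon/3$-net plus uniform continuity of each center $u_i$ on the compact interval is sound; and the sufficiency direction --- sampling at a $\delta$-fine partition, covering the common bounded range by intervals of length $\epsilon/4$, classifying functions by the tuple of intervals hit at the nodes, and verifying that each class has sup-norm diameter at most $3\epsilon/4 < \epsilon$ --- is exactly the delicate combinatorial step that makes the theorem work, and you execute it correctly (for an arbitrary $t$ one picks a node $t_j$ with $|t-t_j|<\delta$ and runs the three-term estimate). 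The only cosmetic remark is that what you establish in the necessity direction is uniform equicontinuity, which is what the lemma's condition (2) means on a compact interval, so nothing is lost. In short: the paper cites, you prove, and your proof is complete and correct.
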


\begin{lemma}[Schauder fixed point theorem \cite{li}]
\label{lem2.3 }
Let $U$ be a closed bounded convex subset of a Banach space $X$. If
$T:U\to U$ is completely continuous, then $T$ has a fixed point in $U$.
\end{lemma}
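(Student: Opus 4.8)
The plan is to reduce the infinite-dimensional problem to a finite-dimensional one, where Brouwer's fixed point theorem applies, and then recover a genuine fixed point by a limiting argument based on the compactness of $T$. First I would record that, since $T$ is compact, the set $K:=\overline{T(U)}$ is a compact subset of $X$; moreover, because $T(U)\subseteq U$ and $U$ is closed, we have $K\subseteq U$. This inclusion, together with convexity of $U$, is what will keep every approximation inside $U$.

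The heart of the argument is the Schauder projection. Fix $n\in\mathbb{N}$ and set $\epsilon_n=1/n$. By total boundedness of $K$, choose a finite $\epsilon_n$-net $\{y_1,\dots,y_{m_n}\}\subseteq K\subseteq U$. Define weights $\mu_i(y)=\max\{0,\,\epsilon_n-\|y-y_i\|\}$ and the projection
$$
P_n(y):=\frac{\sum_{i=1}^{m_n}\mu_i(y)\,y_i}{\sum_{i=1}^{m_n}\mu_i(y)},
\qquad y\in K.
$$
Since every $y\in K$ lies within $\epsilon_n$ of some net point, the denominator never vanishes, each $P_n$ is continuous, and $P_n(y)$ is a convex combination of those $y_i$ with $\|y-y_i\|<\epsilon_n$, whence $\|P_n(y)-y\|\le\epsilon_n$. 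Let $U_n:=\operatorname{conv}\{y_1,\dots,y_{m_n}\}$, which by convexity of $U$ is a compact convex subset of $U$ contained in a finite-dimensional subspace of $X$.

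Next I would apply Brouwer's theorem. The map $T_n:=P_n\circ T$ sends $U_n$ continuously into $U_n$, and $U_n$ is homeomorphic to a compact convex subset of a finite-dimensional Euclidean space, so Brouwer's fixed point theorem yields $x_n\in U_n$ with $T_n x_n=x_n$. By the approximation estimate,
$$
\|x_n-T x_n\|=\|P_n(T x_n)-T x_n\|\le\epsilon_n\longrightarrow 0.
$$
Since $T x_n\in K$ and $K$ is compact, a subsequence $T x_{n_k}$ converges to some $x^\ast\in K\subseteq U$; then $x_{n_k}\to x^\ast$ as well, and continuity of $T$ gives $T x_{n_k}\to T x^\ast$, so that $x^\ast=T x^\ast$ with $x^\ast\in U$.

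The main obstacle I anticipate is the construction and analysis of the Schauder projection: verifying that the weights produce a well-defined continuous map into the correct finite-dimensional convex set and that it approximates the identity on $K$ uniformly to within $\epsilon_n$. Everything else — the invocation of Brouwer in finite dimensions and the extraction of a convergent subsequence — is routine once the compactness of $T$ and the closedness and convexity of $U$ are used to confine all iterates to $U$.
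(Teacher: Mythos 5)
Your proposal is a correct, self-contained proof, but it is worth noting that the paper itself does not prove this statement at all: it is Schauder's fixed point theorem, quoted as a known lemma with a citation to the literature (Li and Sarwar) and then used as a black box in the proof of Proposition~\ref{pr2}. What you have written is the classical argument: since $U$ is bounded and $T$ completely continuous, $K:=\overline{T(U)}$ is compact and, by closedness of $U$, contained in $U$; the Schauder projection $P_n$ onto $U_n=\operatorname{conv}\{y_1,\dots,y_{m_n}\}$ built from an $\epsilon_n$-net of $K$ is continuous, stays in $U$ by convexity, and satisfies $\|P_n(y)-y\|\leq\epsilon_n$ on $K$; Brouwer's theorem applied to $P_n\circ T$ on $U_n$ gives approximate fixed points $x_n$ with $\|x_n-Tx_n\|\leq\epsilon_n$; and compactness of $K$ plus continuity of $T$ upgrade these to a genuine fixed point. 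All steps check out. Two minor points of care: the denominator $\sum_i\mu_i(y)$ is strictly positive only if every $y\in K$ lies within \emph{strictly} less than $\epsilon_n$ of some net point, so the net should be chosen as centers of open balls covering $K$ (which total boundedness permits); and the statement implicitly requires $U\neq\emptyset$, which your construction silently uses when it selects net points. Compared with the paper, your route buys self-containedness at the cost of invoking Brouwer's theorem in finite dimensions, which you correctly leave as the irreducible topological ingredient.
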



\section{Main Results}
\label{sec:3}

We begin by introducing the notion of tube solution for problem \eqref{p1}.

\begin{definition}
\label{definition}
Let $(v,M)\in C^{(\alpha)}([a,T],\mathbb{R})\times C^{(\alpha)}([a,T],[0,\infty))$.
We say that $(v,M) $ is a tube solution to problem \eqref{p1} if
\begin{enumerate}
\item[(i)] $\left(y-v(t)\right) \left(g(t,y)-v^{(\alpha)}\right)
\leq M(t)M^{(\alpha)}(t)$ for every $t\in[a,b]$
and every real number $y$ such that $|y-v(t)|=M(t)$;

\item[(ii)] $v^{(\alpha)}(t)=g(t,v(t))$ and $M^{(\alpha)}(t)=0$
for all $t\in[a,b]$ such that $M(t)=0$;

\item[(iii)] $|u_{a}-v(a)|\leq M(a)$.
\end{enumerate}
\end{definition}
\begin{notation}
We introduce the following notation:
$$
\mathbf{T}(v,M) := \left\{u\in C^{(\alpha)}([a,T],\mathbb{R}) :
| u(t)-v(t)| \leq M(t), \  t\in [a,T]\right\}.
$$
\end{notation}

Consider the following problem:
\begin{equation}
\label{eq:probAux}
\begin{cases}
u^{(\alpha)}+\frac{1}{a^{\alpha}}u(t)
=g(t,\widetilde{u}(t))+\frac{1}{a^{\alpha}}\widetilde{u}(t),
& t\in[a,T], \quad a>0,\\
u(a)=u_{a},
\end{cases}
\end{equation}
where
\begin{equation}
\label{eq:probAux:wt}
\widetilde{u}(t) :=
\begin{cases}
\frac{M(t)}{|u-v(t)|}(u(t)-v(t))+v(t)& \text{ if } |u-v(t)|> M(t),\\
u(t) & \text{ otherwise}.
\end{cases}
\end{equation}
In order to apply Schauder's fixed point theorem,
let us define the operator $\mathbf{K}:C([a,T])\rightarrow C([a,T])$ by
$$
\mathbf{K}(u)(t) := e^{-\frac{1}{\alpha}(\frac{t}{a})^{\alpha}}\left(
e^{\frac{1}{\alpha}}u_{a}+ {_{\alpha}\mathfrak{J}}_{a}^{t}\left[
\frac{g(s,\widetilde{u}(s))+\frac{1}{a^{\alpha}}\widetilde{u}(s)}{
e^{-\frac{1}{\alpha}(\frac{s}{a})^{\alpha}}}\right]\right).
$$

\begin{proposition}
\label{pr2}
If $(v,M)\in C^{(\alpha)}([a,T],\mathbb{R})\times C^{(\alpha)}([a,T],[0,\infty))$
is a tube solution to \eqref{p1}, then
$\mathbf{K}:C([a,T])\rightarrow C([a,T])$ is compact
and problem \eqref{eq:probAux}--\eqref{eq:probAux:wt}
has a solution.
\end{proposition}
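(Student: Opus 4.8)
The plan is to prove the two assertions in turn, with the compactness of $\mathbf{K}$ being the substantive part and the existence of a solution following as a corollary via Schauder's theorem. First I would verify that $\mathbf{K}$ is well defined as a map $C([a,T])\to C([a,T])$: for $u\in C([a,T])$, the modified function $\widetilde{u}$ defined in \eqref{eq:probAux:wt} is continuous (the two branches agree on the boundary set $|u-v(t)|=M(t)$), and by hypothesis $(H_1)$ together with the continuity of $v$ and $M$, the integrand $g(s,\widetilde{u}(s))+\frac{1}{a^{\alpha}}\widetilde{u}(s)$ is continuous on $[a,T]$. The conformable integral ${_{\alpha}\mathfrak{J}}_{a}^{t}$ of a continuous function is continuous in $t$, and multiplication by the continuous exponential prefactor preserves continuity, so $\mathbf{K}(u)\in C([a,T])$. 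I would also note that by Theorem~\ref{th2}, any fixed point of $\mathbf{K}$ is precisely a solution of the auxiliary problem \eqref{eq:probAux}, which is the link needed at the end.

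Next I would establish continuity of $\mathbf{K}$ on $C([a,T])$. Given a sequence $u_n\to u$ uniformly, the map $u\mapsto\widetilde{u}$ is continuous (the truncation is a continuous operation), so $\widetilde{u}_n\to\widetilde{u}$ uniformly; since $g$ is continuous and the relevant range is compact, $g(s,\widetilde{u}_n(s))\to g(s,\widetilde{u}(s))$ uniformly in $s$, whence $\mathbf{K}(u_n)\to\mathbf{K}(u)$ uniformly by passing the limit through the integral. The heart of the proposition is showing that $\mathbf{K}$ is compact, i.e.\ that $\mathbf{K}(C([a,T]))$ lies in a precompact subset of $C([a,T])$; by Lemma~\ref{lem2.2} this reduces to proving uniform boundedness and equicontinuity of the image. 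For uniform boundedness, the key observation is that $|\widetilde{u}(s)-v(s)|\leq M(s)$ always holds by construction, so $\widetilde{u}$ takes values in the compact tube region where $g$ is bounded; combined with the bounded prefactors this yields a uniform bound on $\mathbf{K}(u)(t)$ independent of $u$. For equicontinuity, I would estimate $|\mathbf{K}(u)(t_1)-\mathbf{K}(u)(t_2)|$ by splitting into the contribution from the difference of the exponential prefactors and the contribution from the difference of the integrals; using the uniform bound on the integrand and uniform continuity of the exponential function on $[a,T]$, both contributions can be made small uniformly in $u$ when $|t_1-t_2|$ is small.

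The main obstacle I anticipate is the equicontinuity estimate, specifically controlling the difference of the conformable integrals ${_{\alpha}\mathfrak{J}}_{a}^{t_1}[\cdot]-{_{\alpha}\mathfrak{J}}_{a}^{t_2}[\cdot]=\int_{t_2}^{t_1}\frac{h(s)}{s^{1-\alpha}}\,ds$ uniformly in $u$. Because $\alpha\in(0,1)$, the weight $s^{1-\alpha}$ stays bounded away from zero on $[a,T]$ (as $a>0$), so $\frac{1}{s^{1-\alpha}}$ is bounded there and the integral over $[t_2,t_1]$ is dominated by $(\text{bound on }h)\cdot(\text{bound on }s^{\alpha-1})\cdot|t_1-t_2|$; this is where the hypothesis $a>0$ is essential, since it prevents any singularity of the conformable weight. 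Once uniform boundedness and equicontinuity are in hand, Lemma~\ref{lem2.2} gives precompactness of $\mathbf{K}(C([a,T]))$, so $\mathbf{K}$ is compact and, being also continuous, completely continuous. Finally, restricting to a suitable closed bounded convex set $U$ (for instance a closed ball large enough to contain the image, which exists by uniform boundedness) and invoking Schauder's fixed point theorem (Lemma~\ref{lem2.3 }) produces a fixed point of $\mathbf{K}$, which by Theorem~\ref{th2} is a solution of \eqref{eq:probAux}--\eqref{eq:probAux:wt}, completing the proof.
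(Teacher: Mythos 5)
Your proposal is correct and follows essentially the same route as the paper: continuity of $\mathbf{K}$, then uniform boundedness and equicontinuity of its image, compactness via Arzel\`{a}--Ascoli (Lemma~\ref{lem2.2}), and finally Schauder's fixed point theorem together with Theorem~\ref{th2} to identify the fixed point as a solution of \eqref{eq:probAux}--\eqref{eq:probAux:wt}. The only differences are ones of detail: the paper carries out the continuity step with an explicit estimate that splits the nonlocal term into two pieces (using the positive lower bound of $f$ on the compact tube region), which your sketch compresses, while you are more explicit than the paper in exhibiting the closed bounded convex ball on which Schauder's theorem is actually applied.
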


\begin{proof}
Let $\epsilon > 0$ and $\{u_{n} \}_{n\in\mathbb{N}}$
be a sequence of $C([a,T],\mathbb{R})$
that converges to $u\in C([a,T],\mathbb{R})$.
Remark that $L(t)= e^{-\frac{1}{\alpha}(\frac{t}{a})^{\alpha}}$
is a decreasing function on $[a,T]$. Then, $L(T) \leq L(t) \leq L(a)$
for all $t\in [a, T]$. It results that
\begin{equation*}
\begin{split}
|\mathbf{K}(u_{n}(t)) &- \mathbf{K}(u(t))|
=\Bigg|e^{-\frac{1}{\alpha}(\frac{t}{a})^{\alpha}}\left( e^{\frac{1}{\alpha}}u_{a}
+ {_{\alpha}\mathfrak{J}}_{a}^{t}\left[\frac{g(s,\widetilde{u}_{n}(s))
+ \frac{1}{a^{\alpha}}\widetilde{u}_{n}(s)}{e^{-\frac{1}{\alpha}(
\frac{s}{a})^{\alpha}}}\right]\right)\\
& \qquad -e^{-\frac{1}{\alpha}\left(\frac{t}{a}\right)^{\alpha}}
\left( e^{\frac{1}{\alpha}}u_{a}
+ {_{\alpha}\mathfrak{J}}_{a}^{t}\left[\frac{g(s,\widetilde{u}(s))
+ \frac{1}{a^{\alpha}}\widetilde{u}(s)}{
e^{-\frac{1}{\alpha}\left(\frac{s}{a}\right)^{\alpha}}}\right]\right) \Bigg|\\
&\leq \frac{L(a)}{L(T)} {_{\alpha}\mathfrak{J}}_{a}^{t}\left[\left|
\left(g(s,\widetilde{u}_{n}(s))+ \frac{1}{a^{\alpha}}\widetilde{u}_{n}(s)\right)
- \left(g(s,\widetilde{u}(s)) + \frac{1}{a^{\alpha}}\widetilde{u}(s)\right)
\right|\right]\\
&\leq \frac{L(a)}{L(T)} \left({_{\alpha}\mathfrak{J}}_{a}^{t}
\left[\left|g(s,\widetilde{u}_{n}(s))-g(s,\widetilde{u}(s))\right|\right]
+\frac{1}{a^{\alpha}}{_{\alpha}\mathfrak{J}}_{a}^{t}
\left[\left|\widetilde{u}_{n}(s)-\widetilde{u}(s)\right|\right]\right)
\end{split}
\end{equation*}
or
\begin{equation*}
\begin{split}
g(s,\widetilde{u}_{n}(s))- g(s,\widetilde{u}(s))
&= \frac{\lambda f(s, \widetilde{u}_{n}(s))}{\left(
\int_{a}^{T}f(x, \widetilde{u}_{n}(x))\, dx\right)^{2}}
- \frac{\lambda f(s, \widetilde{u}(s))}{\left(
\int_{a}^{T}f(x, \widetilde{u}(x))\, dx\right)^{2}}\\
&= \frac{\lambda}{\left(\int_{a}^{T}f(x, \widetilde{u}_{n})\, d x\right)^{2}}
\left(f(s, \widetilde{u}_{n}(s))-f(s, \widetilde{u}(s))\right) \\
&+ \lambda f(s, \widetilde{u}(s)) \left( \frac{1}{\left( \int_{a}^{T}
f(x, \widetilde{u}_{n})\, d x\right)^{2}}
- \frac{1}{\left(\int_{a}^{T}f(x, \widetilde{u})\,  d x \right)^{2}} \right)\\
&= I_{1} + I_{2}.
\end{split}
\end{equation*}
Since there is a constant $R>0$ such that
$\|\widetilde{u}\|_{C([a,T],\mathbb{R})}<R$, there exists an index
$N$ such that $\|\widetilde{u}_{n}\|_{C([a,T],\mathbb{R})}\leq R$
for all $n>N$. Thus, $f$ is uniformly continuous and, consequently,
uniformly bounded on $[a,T]\times B_{R}(0)$. Then, there exist
positive constants $A$ and $B$ such that $A \leq f(s, v) \leq B$
for all $(s, v) \in [a,T]\times B_{R}(0)$.
Thus, for a well chosen $D$, which will be given below, one has
$$
\exists \eta > 0, \quad
|\widetilde{u}_{n} -\widetilde{u}| < \eta, \quad \forall x \in [a, T],
\quad \left | f(x, \widetilde{u}_{n})\,  - f(x, \widetilde{u})\right | < D
$$
and
\begin{equation*}
\begin{split}
|I_{1}| &\leq \frac{\lambda}{A^{2}(T-a)^{2}} \left|
f(s, \widetilde{u}_{n}(s))-f(s, \widetilde{u}(s)) \right| \\
& \leq \frac{\lambda D}{A^{2}(T-a)^{2}}.
\end{split}
\end{equation*}
Furthermore, we have
\begin{equation*}
\begin{split}
|I_{2}|
&\leq
\frac{\lambda B \left | \left( \int_{a}^{T}f(x, \widetilde{u}_{n})\, d x\right)^{2}
- \left( \int_{a}^{T}f(x, \widetilde{u})\, d x\right)^{2}\right|}{\left(
\int_{a}^{T}f(x, \widetilde{u}_{n})\, d x\right)^{2}
\left( \int_{a}^{T}f(x, \widetilde{u})\, d x\right)^{2}}\\
&\leq \frac{\lambda  B}{A^{4}(T-a)^{4}} \left | \left(
\int_{a}^{T}f(x, \widetilde{u}_{n})\, - f(x, \widetilde{u}) d \tau \right)
\left( \int_{a}^{T}f(x, \widetilde{u}_{n})\,  +f(x, \widetilde{u}) d \tau \right) \right |\\
& \leq \frac{2 \lambda B^{2}}{A^{4}(T-a)^{3}} \left( \int_{a}^{T}
\left | f(x, \widetilde{u}_{n})\,  - f(x, \widetilde{u}) \right | d \tau \right) \\
& \leq \frac{2\lambda B^{2} D}{A^{4}(T-a)^{2}}.
\end{split}
\end{equation*}
Then,
\begin{equation*}
\begin{split}
|I_{1} + I_{2}| & \leq \lambda D \left ( \frac{1}{A^{2}(T-a)^{2}}
+ \frac{2B^{2}}{A^{4}(T-a)^{2}} \right ):= E
\end{split}
\end{equation*}
and it follows that
\begin{equation*}
\begin{split}
|\mathbf{K}(u_{n}(t)) - \mathbf{K}(u(t))|
& \leq \frac{L(a)}{L(T)} \left ( {_{\alpha}\mathfrak{J}}_{a}^{T}(E)
+ \frac{1}{a^{\alpha}} {_{\alpha}\mathfrak{J}}_{a}^{T}(\eta) \right).
\end{split}
\end{equation*}
On the other hand, we can estimate the right
hand side of the above inequality by
\begin{equation*}
\begin{split}
\frac{L(a)}{L(T)} {_{\alpha}\mathfrak{J}}_{a}^{T}(E)
& \leq \frac{L(a)}{L(T)}E {_{\alpha}\mathfrak{J}}_{a}^{T}(1)\\
& \leq \frac{L(a)}{L(T)}E \frac{T^{\alpha} - a^{\alpha}}{\alpha}
= \frac{\epsilon}{2}
\end{split}
\end{equation*}
and
\begin{equation*}
\begin{split}
\frac{L(a)}{L(T)} \frac{1}{a^{\alpha}}{_{\alpha}\mathfrak{J}}_{a}^{T}(\eta)
& \leq \frac{L(a)}{L(T)} \frac{\eta}{a^{\alpha}}{_{\alpha}\mathfrak{J}}_{a}^{T}(1)\\
& \leq \frac{L(a)}{L(T)} \frac{\eta}{a^{\alpha}} \frac{T^{\alpha} - a^{\alpha}}{\alpha}
= \frac{\epsilon}{2}.
\end{split}
\end{equation*}
If we set
$$
E= \frac{\epsilon}{2} \frac{L(T)}{L(a)} \frac{\alpha}{T^{\alpha} - a^{\alpha}}
=\frac{\alpha \epsilon  L(T)}{2 L(a) T^{\alpha} - a^{\alpha}}
$$
and choose
$$
D= \frac{E}{\lambda} \left ( \frac{1}{A^{2}(T-a)^{2}}
+ \frac{2B^{2}}{A^{4}(T-a)^{2}} \right )^{-1}
$$
and
$$
\eta=\frac{\alpha \epsilon a^{\alpha}
L(T)}{2L(a)(T^{\alpha} - a^{\alpha})},
$$
then
$$
|\mathbf{K}(u_{n}(t)) - \mathbf{K}(u(t))|
\leq \epsilon.
$$
This proves the continuity of $\mathbf{K}$.
To finish the proof of Proposition~\ref{pr2},
we prove three technical lemmas.

\begin{lemma}
If $f$ is locally Lipschitzian,
then the operator $\mathbf{K}$ is continuous.
\end{lemma}

\begin{proof}
It is a direct consequence of the inequality
\begin{equation*}
\begin{split}
|\mathbf{K}(u_{n}(t)) - \mathbf{K}(u(t))|
& \leq c \| u_{n}(t) - u(t)\|,
\end{split}
\end{equation*}
which tends to zero as $n$ goes to $+\infty$.
\end{proof}

\begin{lemma}
\label{lem:16}
The set $\mathbf{K}( C([a,T]))$ is s uniformly bounded.
\end{lemma}

\begin{proof}
Let $u_{n}\in C([a,b])$. We have
\begin{equation*}
\begin{split}
\left|\mathbf{K}(u_{n})(t)\right|
&=\left|e^{-\frac{1}{\alpha}(\frac{t}{a})^{\alpha}}\left(
e^{\frac{1}{\alpha}}u_{a} + {_{\alpha}\mathfrak{J}}_{a}^{t}\left[
\frac{g(s,\widetilde{u}_{n}(s)) + \frac{1}{a^{\alpha}}\widetilde{u}_{n}(s)}{
e^{-\frac{1}{\alpha}\left(\frac{s}{a}\right)^{\alpha}}}\right]\right)\right|\\
&\leq L(a)\left(e^{\frac{1}{\alpha}}|u_{a}|+\frac{1}{K(T)}
{_{\alpha}\mathfrak{J}}_{a}^{T} \left[\left|g(s,\widetilde{u}_{n}(s))
+ \frac{1}{a^{\alpha}}\widetilde{u}_{n}(s)\right|\right]\right)\\
&\leq L(a)\left(e^{\frac{1}{\alpha}}|u_{a}| + \frac{1}{L(T)}
{_{\alpha}\mathfrak{J}}_{a}^{T}\left[
\left|g\left(s,\widetilde{u}_{n}(s)\right)\right|\right]
+\frac{1}{L(T) a^{\alpha}} {_{\alpha}\mathfrak{J}}_{a}^{T}
\left[\left|\widetilde{u}_{n}(s)\right|\right]\right).
\end{split}
\end{equation*}
Similarly to above, there is an $R>0$ such that $|\widetilde{u}_{n}(s)|\leq R$
for all $s\in[a,T]$ and all $n\in\mathbb{N}$. Since function $f$ is compact
on $[a,T]\times B_{R}(0)$, it is uniformly bounded and, as a consequence,
$g$ is also uniformly bounded. We deduce that
\begin{equation*}
\begin{split}
\left | g(s, \widetilde{u}_{n} ) \right |
& \leq \frac{\lambda \left |f(s, \widetilde{u}_{n}(s))\right |}{
\left( \int_{a}^{T} f(s, \widetilde{u}_{n}(s) ) ds \right )^{2}}\\
& \leq \frac{\lambda B}{A^{2}(T-a)^{2}} = G.
\end{split}
\end{equation*}
This ends the proof of Lemma~\ref{lem:16}.
\end{proof}

\begin{lemma}
The set $\mathbf{K}( \mathbb(C([a, T]))$ is equicontinuous.
\end{lemma}

\begin{proof}
For $t_{1},t_{2} \in [a,T]$, we have
\begin{equation*}
\begin{split}
|\mathbf{K}(u_{n})(t_{2}) &- \mathbf{K}(u_{n})(t_{1})|\\
& = \left|e^{-\frac{1}{\alpha}(\frac{t_{2}}{a})^{\alpha}}\left(
e^{\frac{1}{\alpha}}u_{a} + {_{\alpha}\mathfrak{J}}_{a}^{t_{2}}\left[
\frac{g(s,\widetilde{u}_{n}(s)) + \frac{1}{a^{\alpha}}\widetilde{u}_{n}(s)}{
e^{-\frac{1}{\alpha}\left(\frac{s}{a}\right)^{\alpha}}}\right]\right)\right. \\
& - \left. e^{-\frac{1}{\alpha}(\frac{t_{1}}{a})^{\alpha}}\left(
e^{\frac{1}{\alpha}}u_{a} + {_{\alpha}\mathfrak{J}}_{a}^{t_{1}}\left[
\frac{g(s,\widetilde{u}_{n}(s)) + \frac{1}{a^{\alpha}}\widetilde{u}_{n}(s)}{
e^{-\frac{1}{\alpha}\left(\frac{s}{a}\right)^{\alpha}}}\right]\right)\right|\\
&\leq e^{\frac{1}{\alpha}}|u_{a}| \left|e^{-\frac{1}{\alpha}(\frac{t_{1}}{a})^{\alpha}}
-e^{-\frac{1}{\alpha}(\frac{t_{2}}{a})^{\alpha}}\right|
+  \left | {_{\alpha}\mathfrak{J}}_{t_{1}}^{t_{2}}\left[
\frac{g(s,\widetilde{u}_{n}(s)) + \frac{1}{a^{\alpha}}\widetilde{u}_{n}(s)}{
e^{-\frac{1}{\alpha}\left(\frac{s}{a}\right)^{\alpha}}}\right] \right | \\
&\leq e^{\frac{1}{\alpha}}|u_{a}| \left|e^{-\frac{1}{\alpha}(\frac{t_{1}}{a})^{\alpha}}
-e^{-\frac{1}{\alpha}(\frac{t_{2}}{a})^{\alpha}}\right|
+ \frac{1}{L(T)} \left | {_{\alpha}\mathfrak{J}}_{t_{1}}^{t_{2}} ( G+\frac{R}{a^{\alpha}}) \right |\\
&\leq e^{\frac{1}{\alpha}}|u_{a}| \left|e^{-\frac{1}{\alpha}(\frac{t_{1}}{a})^{\alpha}}
-e^{-\frac{1}{\alpha}(\frac{t_{2}}{a})^{\alpha}}\right|
+ \frac{1}{L(T)} ( G+\frac{R}{a^{\alpha}})
\left | {_{\alpha}\mathfrak{J}}_{t_{1}}^{t_{2}} (1) \right|\\
&\leq e^{\frac{1}{\alpha}}|u_{a}| \left|e^{-\frac{1}{\alpha}(\frac{t_{1}}{a})^{\alpha}}
-e^{-\frac{1}{\alpha}(\frac{t_{2}}{a})^{\alpha}}\right|  + \frac{1}{L(T)} (
G+\frac{R}{a^{\alpha}}) \left|t_{1}^{\alpha}-t_{2}^{\alpha}\right|.
\end{split}
\end{equation*}
The right hand of the above inequality does not depend on $u$
and tends to zero as $t_{2} \rightarrow t_{1}$.
This proves that the sequence $(\mathbf{K}(u_{n}))_{n\in \mathbb{N}}$ is equicontinuous.
\end{proof}

By the Arzel\`{a}--Ascoli theorem, which asserts that a subset is relatively compact
if and only if it is bounded and equicontinuous
\cite[p.~607]{MR1987179}, $\mathbf{K}(C([a,b]))$ is relatively compact
and therefore  $\mathbf{K}$ is compact. Consequently,
by the Schauder fixed point theorem,
it has a fixed point (see \cite{MR1987179}),
which is a solution to problem \eqref{eq:probAux}--\eqref{eq:probAux:wt}.
We have just proved Proposition~\ref{pr2}.
\end{proof}

We are now ready to state the main result of the paper.

\begin{theorem}
\label{thm:mr}
If $(v,M)\in C^{(\alpha)}([a,T],\mathbb{R})\times C^{(\alpha)}([a,T],[0,\infty))$
is a tube solution to \eqref{p1}, then problem \eqref{p1} has a solution
$u\in C^{(\alpha)}([a,T],\mathbb{R})\cap \mathrm{T}(v,M)$.
\end{theorem}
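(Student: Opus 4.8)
The plan is to produce a solution of \eqref{p1} as a fixed point of $\mathbf{K}$ that happens to lie inside the tube $\mathbf{T}(v,M)$. By Proposition~\ref{pr2} the operator $\mathbf{K}$ is completely continuous on $C([a,T])$, so the Schauder fixed point theorem furnishes a fixed point $u$, which is a solution of the auxiliary problem \eqref{eq:probAux}--\eqref{eq:probAux:wt}; that is, $u^{(\alpha)}(t)=g(t,\widetilde{u}(t))+\frac{1}{a^{\alpha}}\bigl(\widetilde{u}(t)-u(t)\bigr)$ with $u(a)=u_a$, and by Theorem~\ref{th1} this $u$ belongs to $C^{(\alpha)}([a,T],\mathbb{R})$. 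The whole argument then reduces to showing $u\in\mathbf{T}(v,M)$: on the tube one has $|u(t)-v(t)|\le M(t)$, so the ``otherwise'' branch of \eqref{eq:probAux:wt} gives $\widetilde{u}=u$, the $\frac{1}{a^{\alpha}}$ terms cancel, and \eqref{eq:probAux} collapses precisely to $u^{(\alpha)}(t)=g(t,u(t))$ with $u(a)=u_a$, which is \eqref{p1}.

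The core step, and the main obstacle, is thus the a priori localization $u\in\mathbf{T}(v,M)$. I would establish it by a comparison argument based on Lemma~\ref{le} applied to $r(t):=|u(t)-v(t)|-M(t)$, which lies in $C^{(\alpha)}([a,T])$. Condition~(iii) of Definition~\ref{definition} gives at once $r(a)=|u_a-v(a)|-M(a)\le 0$. It then remains to verify $r^{(\alpha)}(t)<0$ on the set $\{t:r(t)>0\}=\{t:|u(t)-v(t)|>M(t)\}$, whereupon Lemma~\ref{le} forces $r\le 0$ on all of $[a,T]$, i.e.\ $u\in\mathbf{T}(v,M)$.

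To compute $r^{(\alpha)}$ I would invoke Proposition~\ref{pr3} to write $|u-v|^{(\alpha)}=\frac{(u-v)(u^{(\alpha)}-v^{(\alpha)})}{|u-v|}$ and substitute the auxiliary equation for $u^{(\alpha)}$. On the set where $|u-v|>M$, formula \eqref{eq:probAux:wt} reads $\widetilde{u}-v=\frac{M}{|u-v|}(u-v)$, so $|\widetilde{u}-v|=M$, which is exactly the configuration to which tube condition~(i) applies with $y=\widetilde{u}$. Inserting $y=\widetilde{u}$ into (i) and dividing by the positive factor $\frac{M}{|u-v|}$ yields $(u-v)\bigl(g(t,\widetilde{u})-v^{(\alpha)}\bigr)\le |u-v|\,M^{(\alpha)}$, while the extra term, via $\widetilde{u}-u=\bigl(\frac{M}{|u-v|}-1\bigr)(u-v)$, contributes after multiplication by $u-v$ and division by $|u-v|$ precisely $\frac{1}{a^{\alpha}}\bigl(M-|u-v|\bigr)$. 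Assembling these, the $M^{(\alpha)}$ contributions cancel and I obtain
\[
r^{(\alpha)}(t)\le \frac{1}{a^{\alpha}}\bigl(M(t)-|u(t)-v(t)|\bigr)<0
\quad\text{on } \{t:r(t)>0\},
\]
as required. The degenerate points with $M(t)=0$ are handled separately through condition~(ii), which there forces $\widetilde{u}=v$, $g(t,\widetilde{u})=v^{(\alpha)}$, and $M^{(\alpha)}=0$, so the inequality persists. With the localization in hand, the reduction described in the first paragraph shows $\widetilde{u}=u$ and hence that $u\in C^{(\alpha)}([a,T],\mathbb{R})\cap\mathbf{T}(v,M)$ solves \eqref{p1}, completing the proof.
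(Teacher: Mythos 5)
Your proposal is correct and follows essentially the same route as the paper's own proof: Proposition~\ref{pr2} plus Schauder's theorem yields a solution of the auxiliary problem \eqref{eq:probAux}--\eqref{eq:probAux:wt}, and the localization $u\in\mathbf{T}(v,M)$ is obtained exactly as in the paper, by applying Proposition~\ref{pr3} and tube conditions (i)--(ii) to $r(t)=|u(t)-v(t)|-M(t)$ on the set where $r>0$ (with the $M(t)=0$ points treated separately), and then invoking Lemma~\ref{le}. Your write-up is in fact slightly more explicit than the paper on two minor points: the regularity of the fixed point via Theorem~\ref{th1}, and the final observation that $\widetilde{u}=u$ inside the tube makes \eqref{eq:probAux} collapse to \eqref{p1}.
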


\begin{proof}
It remains to show that for every solution $u$
to problem \eqref{eq:probAux}--\eqref{eq:probAux:wt},
$u\in\mathbf{T}(v,M)$.
We argue as in \cite{hammoudi}. Consider the set
$A :=\left\{t\in[a,T]:|u(t)-v(t)|>M(t)\right\}$.
If $t\in A$, then by Proposition~\ref{pr3} one has
$$
\left(|u(t)-v(t)|-M(t)\right)^{(\alpha)}
=\frac{\left( u(t)-v(t)\right) \left(u^{(\alpha)}(t)-v^{(\alpha)}(t)
\right)}{|u(t)-v(t)|}-M^{(\alpha)}(t).
$$
Thus, since $(v,M)$ is a tube solution to problem \eqref{p1},
we have on $\{ t\in A : M(t)> 0\}$ that
\begin{equation*}
\begin{split}
(|u(t)&-v(t)|-M(t))^{(\alpha)}\\
&= \frac{\left(u(t)-v(t)\right) \left(u^{(\alpha)}(t)-v^{(\alpha)}(t)
\right)}{|u(t)-v(t)|} - M^{(\alpha)}(t)\\
&= \frac{\left(u(t)-v(t)\right) \left(g(t,\widetilde{u}(t))
+\left(\frac{1}{a^{\alpha}}\widetilde{u}(t)-\frac{1}{a^{\alpha}}u(t)\right)
-v^{(\alpha)}(t)\right)}{|u(t)-v(t)|}-M^{(\alpha)}(t)\\
&= \frac{\left(\widetilde{u}(t)-v(t)\right)
\left(g(t,\widetilde{u}(t))- v^{(\alpha)}(t)\right)}{M(t)}
+\frac{\left(\widetilde{u}(t)-v(t)\right) \left(\widetilde{u}(t)-u(t)
\right)}{a^{\alpha}M(t)}-M^{(\alpha)}(t)\\
&=\frac{\left(\widetilde{u}(t)-v(t)\right)
\left(g(t,\widetilde{u}(t))- v^{(\alpha)}(t)\right)}{M(t)}\\
&\quad +\left[\frac{M(t)}{|u(t)-v(t)|}-1\right]\frac{|u(t)-v(t)|^{2}}{a^{\alpha}
\left|u(t)-v(t)\right|}-M^{(\alpha)}(t)\\
&=\frac{\left(\widetilde{u}(t)-v(t)\right) \left(g(t,\widetilde{x}(t))
- v^{(\alpha)}(t)\right)}{M(t)}+\left[\frac{M(t)}{a^{\alpha}}
-\frac{|u(t)-v(t)|}{a^{\alpha}}\right]- M^{(\alpha)}(t)\\
&\leq \frac{M(t)M^{\alpha}(t)}{M(t)}+\frac{1}{a^{\alpha}}
\left[ M(t)-|u(t)-v(t)|\right]-M^{(\alpha)}(t)\\
&< 0.
\end{split}
\end{equation*}
On the other hand, by Definition~\ref{definition},
we have on $t\in\{ \tau \in A : M(\tau)= 0\}$ that
\begin{equation*}
\begin{split}
(|u(t)- & v(t)|-M(t))^{(\alpha)}\\
&=\frac{\left(u(t)-v(t)\right) \left(g(t,\widetilde{u}(t))
+\left(\frac{1}{a^{\alpha}}\widetilde{u}(t)
-\frac{1}{a^{\alpha}}u(t)\right)
-v^{(\alpha)}(t)\right)}{|u(t)-v(t)|} - M^{(\alpha)}(t)\\
&= \frac{\left(u(t)-v(t)\right) \left(g(t,\widetilde{u}(t))
-v^{(\alpha)}(t) \right)}{|u(t)-v(t)|}
-\frac{1}{a^{\alpha}}|u(t)-v(t)|-M^{(\alpha)}(t)\\
&< -M^{(\alpha)}(t)\\
&=0.
\end{split}
\end{equation*}
If we set
$r(t):= |u(t)-v(t)|-M(t)$, then $r^{(\alpha)}<0$ on $A:= \{t\in [a,T]:r(t)>0\}$.
Moreover, $r(a)\leq 0$ since $u$ satisfies $|u_{a}-v(a)|\leq M(a)$.
It follows from Lemma~\ref{le} that $A=\emptyset$. Therefore, $u\in \mathrm{T}(v,M)$
and the proof of the theorem is complete.
\end{proof}


\section*{Acknowledgements}

This research was carried out while
Sidi Ammi was visiting the Department
of Mathematics of University of Aveiro, Portugal, on May 2018.
The hospitality of the host institution is here gratefully acknowledged.
The authors were supported by the \emph{Center for Research
and Development in Mathematics and Applications} (CIDMA)
of the University of Aveiro, through Funda\c{c}\~ao
para a Ci\^encia e a Tecnologia (FCT),
within project UID/MAT/04106/2013.
They are grateful to two anonymous referees
for valuable comments and suggestions.



\end{document}